\documentclass[a4paper,12pt, tikz]{article}
\usepackage[T1]{fontenc}
\usepackage[utf8]{inputenc}
\usepackage{lmodern}
\usepackage{xcolor}
\usepackage{textcomp}
\usepackage{graphicx}
\usepackage{multicol}
\usepackage{etoolbox}
\usepackage{picinpar}
\usepackage[utf8]{inputenc}
\usepackage{lmodern}
\usepackage{setspace}
\usepackage{everyshi}
\usepackage{pdfpages}
\usepackage[hyphens]{url}
\usepackage{hyperref}
\usepackage{pdfpages}
\usepackage{pgfpages}
\usepackage{amssymb}
\usepackage{mathtools}
\usepackage{amsmath}
\usepackage{amsthm}
\usepackage[notextcomp]{stix}
 \usepackage{paracol}
 \usepackage{stmaryrd}
 \usepackage{xurl}
 \usepackage{wasysym}
\usepackage{tikz}
 \usetikzlibrary{cd}
 \usepackage{authblk}
\usepackage[left=2cm, right=2cm, top=2cm, bottom=2cm]{geometry}

\renewcommand{\S}{\mathbb{S}^}

\newcommand{\Cech}{\operatorname{\check{C}ech}}

\newcommand{\lb}{\left(}
\newcommand{\rb}{\right)}
\newcommand{\lc}{\left\lbrace}
\newcommand{\rc}{\right\rbrace}
\newcommand{\ls}{\left[}
\newcommand{\rs}{\right]}
\renewcommand{\la}{\left\langle}
\renewcommand{\ra}{\right\rangle}

\newcommand{\dash}{^\prime}

\newcommand{\naturals}{\mathbb{N}}

\newcommand{\covr}{\operatorname{cov}}
\newcommand{\cov}{\operatorname{Cov}}
\newcommand{\link}{\operatorname{lk}}
\newcommand{\del}{\operatorname{del}}

\newcommand{\D}{\Delta}

\newcommand{\g}{\gamma}
\renewcommand{\t}{\tau}
\renewcommand{\a}{\alpha}

\newcommand{\s}{\sigma}
\newcommand{\p}{\pi}

\renewcommand{\implies}{\Rightarrow}

\newcommand{\cF}{\mathcal{F}}

\renewcommand{\H}{\mathcal{H}}

\newcommand{\N}{\mathcal{N}}

\newcommand{\tends}{\rightarrow}
\newcommand{\id}{\text{id}}
\newcommand{\inclusion}{\hookrightarrow}

\newcommand{\lmid}{\left\vert}
\newcommand{\rmid}{\right\vert}
\newcommand{\given}{~\middle\vert{}~}

\newtheorem{thm}{Theorem}[section]

\newtheorem{defn}[thm]{Definition}

\newtheorem{lem}[thm]{Lemma}
\newtheorem{conj}[thm]{Conjecture}

\newtheorem{note}[thm]{Note}

\usepackage{tikzpagenodes}
\usepackage{tikz}
\usepackage{eso-pic}
\begin{document}
\title{$\Cech$ complexes for finite sets}
%\date{}
\author{Anamitro Biswas \thanks{The work was done when the author was affiliated to Department of Mathematics, Indian Institute of Technology Bhilai. The problem was suggested by Dr. Anurag Singh, Department of Mathematics, IIT Bhilai. The author also gratefully acknowledges important insights received from Dr. Himanshu Chandrakar, who was at IIT Bhilai at that time.}}
%\affil[1]{Department of Mathematics, Indian Institute of Technology Bhilai}

\maketitle
\begin{abstract}
\noindent{}Let $\mathcal{F}^{m}=\{A\subset[m]\}$, endowed with the symmetric difference metric and the Cardinality Reverse-Lexicographic order. That way, we compute the homotopy type $\Cech\lb \cF_{\preceq a}^m; \frac{r}{2}=1\rb$ by relating it to consecutively adding vertices of $\mathbb{I}^m$, the unit hypercube of $m$th dimension, to point $0$. Further, for higher $r$, we propose a conjecture regarding contractibility of a subcomplex implying a formula for the exact number of wedges of $\S 3$'s and $\S 4$'s in the $\Cech$ complex.
\end{abstract}
\tableofcontents
\section{Introduction}
Let $\mathcal{F}_{n}^{m}=\{A\subset[m]:|A|=n\}$. We equip this space with the symmetric difference metric $d$; $d(A,B) = |(A\setminus B)\cup(B\setminus A)|$. For $S$ a finite subset of a metric space $X$, and a scaler $r\ge0$, the \v{C}ech complex $\Cech(S;r)$ is an abstract simplicial complex whose vertex set is $S$ and $\sigma\subset S$ is a simplex if and only if $ \bigcap_{x\in\sigma}B\left[x,\frac{r}{2}\right]\ne\phi $ where $B[x,\frac{r}{2}]$ is a closed ball of radius $\frac{r}{2}$ around $x$. We know that a subset of a simplex is also a simplex.

If we have two subsets $a,b\subset \ls m\rs$, i.e., $a,b\in P\lb \ls m\rs\rb$, we arrange them in the ascending order in the following way:
\begin{itemize}
  \item If $|a|>|b|$, we also have $a\succ  b$, where $|a|$ is the cardinality of $a$.
  \item If $|a|=|b|$, then let $a=\lc i_1,i_2,\dots, i_{|a|}\rc$ and $b=\lc j_1,j_2,\dots, j_{|a|}\rc$. Let $k$ be the smallest index such that $i_k\neq j_k$. Then $a\prec  b$ accordingly as $i_k<  j_k$ etc.
\end{itemize}
We can see this in an alternative manner. See each $a=\lc i_1,\dots i_{|a|}\rc\in P\lb\ls m\rs\rb$ as an element in the $m$-dimensional unit hypercube $\mathbb{I}^m$, with each $i_k$th coordinate 1 for $k\in\lc 1,\dots, |a|\rc$ and all other coordinates 0. In this scenario, our ordering is equivalent to:
\begin{itemize}
  \item If $|a-0|>|b-0|$, we also have $a\succ b$, where $|a-0|$ is the Hamming distance of $a$ from 0, i.e., $|a-0|=\sum_{i=1}^m\lmid a_i-0\rmid$.
  \item Between points with the same distance from $\lb 0, \dots , 0\rb$, we follow the reverse of lexicographic order.
\end{itemize}
We call this the \emph{Cardinality Reverse-Lexicographic (CRL) order}. Also, observe that with this notation, if we drop the subsequent 0's after the last non-zero coordinates, we can view our point in $\mathbb{I}^m$ for a lesser $m$ (with easier visualization), but that does not affect our calculations. Because coordinates in higher dimension don't bother us here, and also we can carry forward our results to higher dimension without any change, just by reverting to coordinates with the latter entries ``0''.

We wish to compute $\Cech \lb\cF_n^m;r\rb,~r>0$. When we are looking at $\N\lb \cF_2^m;2\rb$, the facets are $N\ls \lc x,y\rc\rs=\lc\lc i,x\rc~:~i\in\ls m\rs\setminus x\rc\cup \lc \lc j,y\rc~:~j\in\ls m\rs\setminus y\rc$. In other words, $\s\in E\lb K_m\rb$ forms a simplex is $\exists~x,y$ such that $\forall~e\subset \s$ either $x\in e$ or $y\in e$ or both. Given $H\in E(G)$ we say that a subset $A\subseteq V(G)$ covers $H$ is every edge in $H$ intersects $A$. Covering number of $H$ is defined as $\displaystyle\min_{A\subseteq G}\lc \# A: A\text{ is a cover of }H\rc$. Also, $\s\in\N\lb \cF_2^m;2\rb$ if and only if $\covr\lb\s\rb\leq 2$. In \cite[Section 26.6]{J}, $\cov_{m,2}$ corresponds to our $\N\lb \cF_2^m;4\leq r<8\rb$, which should be homotopic to a wedge of $\S 3$'s and $\S 4$'s. However, the exact number of wedge components are not known, and is worth investigation.

\cite{FN} computes homotopy types for Vietoris-Rips complexes of such spaces. In this paper, we exhaustibly describe the open balls in $\Cech\lb \cF_n^m; r\rb$ and in the subsequent section, using computations for smaller $n$, inductively prove a formula for the homotopy types of $\Cech\lb \cF_n^m; r=2\rb$ and $\Cech\lb \cF_{\preccurlyeq a}^m; r=2\rb$ for some $a\in\cF_n^m$. In the last section, we talk about future directions, and show an approach to compute the homotopy types for general $r$, calculating the exact number of $\S 3$'s and $\S 4$'s in the wedge.

\section{Preliminaries}
\subsection{Simplicial complexes}
A \emph{simplicial complex} $L$ on a finite set $V$ is a family of non-empty subsets of $V$ such that if $\s\in L$ and $\t\subseteq \s$ is non-empty, then $\t\in L$.
\begin{defn}[Nerve complex]
Let $\mathcal{U}=\lc U_i\rc_{i\in\lc 1, \dots, k\rc}$ be a collection of subsets of a metric space X. The nerve of $\mathcal{U}$, $\N\lb \mathcal{U}\rb$ is a simplicial complex such that
\begin{itemize}
  \item the vertex set is $\mathcal{U}$;
  \item a subset $\s\subseteq \mathcal{U}$ is a simplex if and only if $\cap_{U\in \s}U\neq \phi$.
\end{itemize}
\end{defn}
A $\Cech$ complex is the nerve of the corresponding collection of $r$-balls, i.e., $\Cech\lb S; r\rb=\N\lb\lc B\lb s,r\rb\rc_{s\in S}\rb$.
\subsection{The link-deletion method}
For any vertex $v$ in a complex $K$, \emph{deletion} $K\setminus v$ denotes the induced complex on the vertex set $K^{(0)}\setminus \lc v\rc$. The \emph{link} of $v$ in $K$, $\link_K\lb v\rb=\lc \s:\s\cup\lc v\rc\in K\text{ and }v\not\in\s\rc$.

We have the following lemma as an useful computational tool:
\begin{lem}\label{k1-union-k2}
\cite{GSS} If $K$ is a simplicial complex and for $K=K_1\cup K_2$, the inclusion maps $i_j:K_1\cap K_2\inclusion K_j$ are both null-homotopic. Then $K\simeq K_1\vee K_2\vee \Sigma\lb K_1\cap K_2\rb$.
\end{lem}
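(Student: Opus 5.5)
The plan is to build the homotopy equivalence from the homotopy pushout of the square formed by $K_1\cap K_2$, $K_1$, $K_2$ and $K$. Since $K_1$ and $K_2$ are subcomplexes of $K$ with union $K$, the inclusion $K_1\cap K_2\inclusion K_j$ is a cofibration: the pair $(K_j,K_1\cap K_2)$ is a CW pair, so it has the homotopy extension property. The strict pushout of $K_1\hookleftarrow K_1\cap K_2\inclusion K_2$ is $K$ itself. Because at least one leg is a cofibration, this strict pushout is homotopy equivalent to the double mapping cylinder
\[
M=K_1\cup \lb (K_1\cap K_2)\times[0,1]\rb\cup K_2 .
\]
So the first step is simply to record $K\simeq M$.

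Next I use the fact that the homotopy type of a double mapping cylinder depends only on the homotopy classes of its two attaching maps. This follows from the gluing lemma / homotopy invariance of homotopy pushouts. By hypothesis $i_1$ and $i_2$ are null-homotopic, so I may replace each of them by a constant map. To do this concretely, I pick basepoints $p_1\in K_1$ and $p_2\in K_2$ and write $A=K_1\cap K_2$ with constant maps $c_j:A\to\{p_j\}$. Then
\[
M\simeq K_1\cup_{c_1}\lb A\times[0,1]\rb\cup_{c_2} K_2 .
\]
Here $A\times[0,1]$ has $A\times\{0\}$ collapsed to $p_1$ and $A\times\{1\}$ collapsed to $p_2$. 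This is exactly the unreduced suspension $SA$, with its north pole glued to $p_1\in K_1$ and its south pole glued to $p_2\in K_2$.

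Then I convert this space into a wedge. If $A$ is nonempty, pick a point $a_0\in A$. The arc $\{a_0\}\times[0,1]$ is contractible and embedded as a subcomplex, so collapsing it is a homotopy equivalence. The collapse identifies $p_1$ and $p_2$ and turns $SA$ into the reduced suspension $\Sigma A$, which is homotopy equivalent to $SA$ for CW complexes. The result is $K_1\vee K_2\vee\Sigma\lb K_1\cap K_2\rb$.

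The case $A=\phi$ needs a convention. The natural one is $\Sigma\phi=S^0$, so that wedging in $S^0$ adds the joining arc. I would remark on this case or assume $A\ne\phi$. The degenerate case and getting the basepoint conventions right are the only delicate points.

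I expect the main obstacle to be justifying the replacement of the maps by homotopic ones rigorously. For this I would cite the standard result (e.g. Hatcher, Prop. 0.18) that if $(X_1,A)$ is a CW pair and $f\simeq g:A\to X_0$, then $X_0\cup_f X_1\simeq X_0\cup_g X_1$ rel $X_0$. I would apply it twice, once to each end of the cylinder. The gluing for $i_2$ is performed in the space already modified at the $i_1$ end, and the pair condition still holds there.
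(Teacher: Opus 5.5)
The paper gives no proof of this lemma; it only quotes it from \cite{GSS}, so there is no internal argument to compare with. Your argument is the standard proof of this fact and is correct when $K_1\cap K_2\neq\phi$. It passes from $K$ to the double mapping cylinder, uses Hatcher's Proposition 0.18 to replace the null-homotopic inclusions by constant maps, identifies the result as $K_1\cup_{p_1}S(K_1\cap K_2)\cup_{p_2}K_2$, and then collapses the arc $\{a_0\}\times[0,1]$ to get $K_1\vee K_2\vee\Sigma(K_1\cap K_2)$.

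Two refinements are worth making:
\begin{itemize}
\item A single application of Proposition 0.18 suffices. Take $X_0=K_1\sqcup K_2$, the CW pair $(A\times[0,1],A\times\{0,1\})$, and the homotopy $i_1\sqcup i_2\simeq c_1\sqcup c_2$. This spares you checking that the half-modified space is still a CW complex, which would need $p_1$ to be a vertex.
\item Take $p_j$ to be the copy of a vertex $a_0\in A$ in $K_j$. This is legitimate because a null-homotopic $i_j$ lands in one path component, and constant maps into the same path component are homotopic. It pins down the wedge point when some $K_j$ is disconnected, where the wedge would otherwise be ambiguous.
\end{itemize}

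Your proposed convention for $A=\phi$ does not work. Wedging with $\mathbb{S}^0$ adds a disjoint point, not a joining arc. So $K_1\vee K_2\vee\mathbb{S}^0=(K_1\vee K_2)\sqcup\{\ast\}$, whereas $K=K_1\sqcup K_2$.

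For example, take two disjoint hollow triangles. Then $K\simeq\mathbb{S}^1\sqcup\mathbb{S}^1$, which is not homotopy equivalent to $(\mathbb{S}^1\vee\mathbb{S}^1)\sqcup\{\ast\}$, because the components have different fundamental groups. When $A=\phi$ only your two-pole description $K_1\cup_{p_1}S\phi\cup_{p_2}K_2$ is correct, and that is not a wedge. The lemma should therefore simply carry the hypothesis $K_1\cap K_2\neq\phi$, rather than rely on a convention for $\Sigma\phi$.
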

As a corollary, we have,
\begin{lem}\label{link-deletion}
\cite[Lemma 2]{AA}
Let $v\in K$ be a vertex and the inclusion $\link_K\lb v\rb\inclusion K$ is null-homotopic. Then, $K\simeq K\setminus v\vee\Sigma\lb \link_K(v)\rb$.
\end{lem}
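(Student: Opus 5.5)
We derive the statement from Lemma~\ref{k1-union-k2} by decomposing $K$ around the vertex $v$. Put
\[
K_1 = K\setminus v,\qquad K_2 = \overline{\operatorname{st}}_K(v)=\lc \s\in K : \s\cup\lc v\rc\in K\rc ,
\]
where $K_2$ is the closed star of $v$, i.e.\ the cone $v * \link_K(v)$.

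\textbf{Step 1: $K=K_1\cup K_2$.} Take any simplex $\s\in K$. If $v\notin\s$, then $\s$ lies in the induced complex on $K^{(0)}\setminus\lc v\rc$, so $\s\in K_1$. If $v\in\s$, then $\s\cup\lc v\rc=\s\in K$, so $\s\in K_2$.

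\textbf{Step 2: $K_1\cap K_2=\link_K(v)$.} A simplex in the intersection avoids $v$, and its union with $\lc v\rc$ lies in $K$, so it belongs to $\link_K(v)$. Conversely, a simplex $\s$ of $\link_K(v)$ avoids $v$ and is a face of $\s\cup\lc v\rc\in K$. Hence $\s\in K$ (as $K$ is closed under faces), and since $\s$ avoids $v$ it lies in the induced complex $K_1$. Clearly $\s\in K_2$ as well.

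\textbf{Step 3: the two inclusions.} The inclusion $\link_K(v)\inclusion K_2$ is null-homotopic because $K_2$ is a cone with apex $v$, hence contractible. The inclusion $\link_K(v)\inclusion K_1$ is the step I expect to be the main obstacle, since the hypothesis only gives that the composite $\link_K(v)\inclusion K_1\inclusion K$ is null-homotopic, which is weaker. To bridge this, note that $K_2$ is contractible, so the quotient map $K\to K/K_2$ is a homotopy equivalence. We then compare with the cofibre sequence
\[
\link_K(v)\inclusion K\setminus v \to K/K_2 .
\]
This cofibre sequence is legitimate because $\link_K(v)=K_1\cap K_2$ and $K/K_2 = K_1/(K_1\cap K_2)$. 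If one instead reads Lemma~\ref{k1-union-k2} as requiring only that the inclusion into $K$ be trivial (the standard form in \cite{AA}), one can argue directly. The inclusion $\link_K(v)\inclusion K$ extends over the cone $C\link_K(v)$ by the null-homotopy. Gluing this cone to the contractible $K_2$ along $\link_K(v)$ gives a suspension, and the standard argument yields
\[
K\simeq K_1\cup_{\link_K(v)} K_2 \simeq (K\setminus v)\vee \Sigma\lb\link_K(v)\rb .
\]
Concretely, $K\simeq K\cup C(\link_K(v))\simeq K\setminus v\vee\Sigma\lb\link_K(v)\rb$, where the first equivalence uses that $K_2$ is contractible and the second uses that the attaching map into $K\setminus v$ is null-homotopic after collapsing $K_2$.

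\textbf{Step 4: conclusion.} With both inclusions handled, Lemma~\ref{k1-union-k2} gives
\[
K\simeq (K\setminus v)\vee K_2\vee\Sigma\lb\link_K(v)\rb .
\]
Since $K_2$ is contractible, it drops out of the wedge, leaving the claimed $K\simeq K\setminus v\vee\Sigma\lb\link_K(v)\rb$.
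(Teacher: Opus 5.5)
There is a genuine gap in Step~3, and it cannot be closed for the statement as written. Null-homotopy of $\link_K(v)\hookrightarrow K$ is a vacuous hypothesis. The link always lies in the closed star $v*\link_K(v)$, which is a cone; you use this yourself for $K_2$. Read literally, the lemma would therefore give the splitting at every vertex of every complex, and that is false.

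For a counterexample, take $K=v*\partial\Delta^2$, the three triangles $\{v,a,b\}$, $\{v,b,c\}$, $\{v,c,a\}$ with their faces. Then $K$ is a disk. But $\link_K(v)=K\setminus v$ is the hollow triangle on $a,b,c$, so $(K\setminus v)\vee\Sigma\link_K(v)\simeq\mathbb{S}^1\vee\mathbb{S}^2\not\simeq K$. No argument can upgrade the hypothesis to null-homotopy into $K\setminus v$.

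Your bridging steps fail accordingly:
\begin{itemize}
\item Your cofibre observation is right: in fact $K$ is literally the mapping cone $(K\setminus v)\cup C(\link_K(v))$. But that mapping cone splits as $(K\setminus v)\vee\Sigma\link_K(v)$ only if the attaching inclusion $\link_K(v)\hookrightarrow K\setminus v$ is null-homotopic, which is exactly what is missing.
\item Collapsing $K_2$ collapses $\link_K(v)$ itself, so it tells you nothing about that attaching map.
\item Gluing the cone onto all of $K$ instead gives $K\cup C(\link_K(v))\simeq K\vee\Sigma\link_K(v)$. In the example this is $\mathbb{S}^2$, so your claimed equivalence $K\simeq K\cup C(\link_K(v))$ is false too.
\item For the same reason, the cited lemma of [AA] cannot have the weaker form you attribute to it.
\end{itemize}

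The correct reading is that the hypothesis should be null-homotopy of $\link_K(v)\hookrightarrow K\setminus v$. This is how the paper actually applies the lemma: ``$\link(1,0,1)$ is contractible inside $\del(1,0,1)$'', and $\link(\alpha)$ is contractible in $\Cech(G_{\text{no. }\alpha-1})$. With that hypothesis, your Steps~1, 2 and~4 plus the cone argument for $K_2$ are precisely the paper's intended derivation ``as a corollary'' of Lemma~\ref{k1-union-k2}. Your cofibre route also finishes directly: $K\simeq K/K_2=(K\setminus v)/\link_K(v)$ is the cofibre of a null-homotopic inclusion, hence $\simeq(K\setminus v)\vee\Sigma\link_K(v)$.
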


\subsection{Homotopy type of a complete graph}
\begin{defn}
A \emph{complete graph} is a simple undirected graph in which every pair of distinct vertices has a unique edge between them. The complete graph for $n$ vertices, $n\in\naturals$, is denoted by $K_n$.
\end{defn}
Any connected topological graph is homotopic to a wedge sum of $\S 1$'s, the number of which is given by the formula $\#\text{edges}-\#\text{vertices}+1$. So,
$$\displaystyle K_n\simeq \bigvee_{\lmid E\lb K_n\rb\rmid-\lmid V\lb K_n\rb\rmid+1}\S 1=\bigvee_{\binom{m}{2}-m+1}\S 1=\bigvee_{\binom{m-1}{2}}\S 1.$$
Using this result, we shall show in Eq (\ref{use-of-nerve}) that the homotopy type of $n+1$ pairwise point-intersecting $n$-simplices is a wedge of $\S 1$'s. We also use the Nerve Theorem.

\begin{thm}\label{nerve}
[Nerve theorem]
Let $F$ be a finite collection of closed, convex sets in the Euclidean space. Then the nerve of $F$ and the union of the sets in $F$ have the same homotopy type.
\end{thm}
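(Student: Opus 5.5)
The plan is the classical route: thicken both the nerve and the union to a common space and show that each projects onto its base with contractible fibres. Let $F=\lc F_1,\dots,F_k\rc$ be closed convex sets in $\mathbb{R}^d$ with union $U=\bigcup_i F_i$, and let $|\N(F)|$ be the geometric realization of the nerve, with vertices $e_1,\dots,e_k$. We use the subspace
\[
Z=\lc (x,t)\in U\times|\N(F)| \;:\; x\in F_i \text{ for every vertex } e_i \text{ of the support simplex of } t\rc ,
\]
together with the two coordinate projections $p:Z\tends U$ and $q:Z\tends|\N(F)|$. We will show that both projections are homotopy equivalences, which gives $U\simeq Z\simeq|\N(F)|$.

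For $p$, the fibre over $x\in U$ is the full simplex spanned by $\lc i: x\in F_i\rc$. This set of indices is nonempty, and it is a simplex of the nerve because $x$ lies in the intersection of those sets; so the fibre is contractible. To get an actual homotopy inverse, we take a partition of unity $\lc\phi_i\rc$ subordinate to a slight open thickening of the cover. Since the cover is finite and the sets are closed, each $x$ has a neighbourhood meeting only those $F_i$ that contain $x$, so such a thickening exists and we may arrange $\operatorname{supp}\phi_i\cap U\subseteq$ a neighbourhood of $F_i$. The map $s(x)=\lb x,\sum_i\phi_i(x)e_i\rb$ should then land in $Z$, and $p\circ s=\id$. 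Finally, $s\circ p\simeq\id_Z$ by straight-line homotopy in the second coordinate: both $t$ and $\sum_i\phi_i(x)e_i$ lie in the simplex spanned by the sets containing $x$, and that simplex is convex.

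For $q$, the fibre over a point $t$ in the open simplex $\s$ is $\bigcap_{i\in\s}F_i$. This set is nonempty by the definition of the nerve, and it is convex as an intersection of convex sets, hence contractible. We build a section $\g$ of $q$ by induction over the skeleta of $|\N(F)|$. First send each vertex $e_i$ to a chosen point $c_i\in F_i$. Then, for each simplex $\s$, pick a barycentre point $c_\s\in\bigcap_{i\in\s}F_i$ and extend $\g$ by coning from $c_\s$ over the already-defined boundary. For the coning to stay in the fibre, a face $\t\subseteq\s$ must be sent into $\bigcap_{i\in\t}F_i$, which contains $\bigcap_{i\in\s}F_i$; convexity of $\bigcap_{i\in\t}F_i$ then keeps the cone segments inside it. Equivalently, $\g$ is the barycentric-subdivision map that sends each vertex $\s$ of the subdivision to $c_\s$ and extends linearly; each simplex of the subdivision is a chain $\s_0\subset\cdots\subset\s_j$, whose points $c_{\s_l}$ all lie in the convex set $\bigcap_{i\in\s_0}F_i$, so the linear extension stays there. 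Then $q\circ\g=\id$, and $\g\circ q\simeq\id_Z$ by straight-line homotopy in the first coordinate, which stays inside the convex fibre $\bigcap_{i\in\s}F_i$.

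The main obstacle we expect is the continuity and well-definedness of these maps near boundaries, in particular checking that $s$ really lands in $Z$ when $x$ lies on the boundary of some $F_i$. This is where closedness and finiteness of $F$ are used. We will first verify that $x\notin F_i$ forces $\phi_i$ to vanish near $x$, by choosing $\phi_i$ to be supported in $\lc y: d(y,F_i)<\varepsilon(y)\rc$ with $\varepsilon$ small enough to avoid the finitely many other sets. If that step proves awkward, the fallback is to replace the partition-of-unity argument for $p$ by the standard fact that a proper map with contractible, locally contractible convex fibres between compact ANRs is a homotopy equivalence (Vietoris–Begle). For unbounded $F_i$ we would first reduce to the compact case by intersecting every $F_i$ with a large ball that meets all nonempty intersections; this leaves the nerve unchanged and changes $U$ only by a deformation retraction.
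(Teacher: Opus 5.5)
The paper gives no proof of this statement; it quotes the classical Nerve theorem and uses it as a black box, so your argument has to stand on its own. As written it has a genuine gap. Your space $Z$ (the realization of the intersection diagram, as in Hatcher's Section 4.G) and both fibre computations are correct. However, both homotopy inverses you propose are sections ($p\circ s=\mathrm{id}$, $q\circ\gamma=\mathrm{id}$), and for a \emph{closed} cover neither projection has a section in general.

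For $p$, take $F_1=[0,1]$ and $F_2=[1,2]$ in $\mathbb{R}$. The fibre of $p$ over $x<1$ is $\{e_1\}$ and over $x>1$ is $\{e_2\}$, so no section is continuous at $x=1$. In your terms, $s$ lands in $Z$ only if $\phi_i(x)>0\Rightarrow x\in F_i$ for all $x\in U$. That forces the relative interiors $\operatorname{int}_U F_i$ to cover $U$; here it forces $\phi_1(1)=\phi_2(1)=0$. Your proposed fix (``$x\notin F_i$ forces $\phi_i$ to vanish near $x$'') is exactly this impossible condition. Merely supporting $\phi_i$ in a neighbourhood of $F_i$ does not help either: it puts weight on $e_i$ at points outside $F_i$, so $s(x)\notin Z$. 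The partition-of-unity argument proves the nerve theorem for \emph{open} covers and does not transfer to closed ones.

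For $q$, convexity is applied to the wrong set. A point in the interior of the subdivision simplex $\sigma_0\subset\cdots\subset\sigma_j$ has support $\sigma_j$. Its fibre is therefore $\bigcap_{i\in\sigma_j}F_i$, the \emph{smallest} of the intersections, whereas you only know $c_{\sigma_0},\dots,c_{\sigma_j}\in\bigcap_{i\in\sigma_0}F_i$. Likewise, in the coning description the segments would have to stay in $\bigcap_{i\in\sigma}F_i$, not merely in $\bigcap_{i\in\tau}F_i$. Again no section exists: for $F_1=[0,1]$, $F_2=[1,2]$, $F_3=[2,3]$, a section would have first coordinate $\equiv 1$ on the open edge $e_1e_2$ and $\equiv 2$ on the open edge $e_2e_3$, which is discontinuous at $e_2$. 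Your barycentric map $|\mathcal{N}(F)|\to U$ is a perfectly good map; it is just not a section of $q$.

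The missing ingredient is a proof that $p$ and $q$ are homotopy equivalences without using sections.
\begin{itemize}
\item For $q$: induct over skeleta, comparing $Z$ with the same construction in which every $C_\sigma=\bigcap_{i\in\sigma}F_i$ is collapsed to a point. Use contractibility of the $C_\sigma$ and the homotopy extension property of the pairs $(C_\sigma\times\Delta^n, C_\sigma\times\partial\Delta^n)$.
\item For $p$, either of two routes works. One is to thicken to open convex $\varepsilon$-neighbourhoods; this needs compactness to keep the nerve unchanged, plus a separate comparison of $U$ with the thickening. The other is a gluing argument: finite unions of closed convex sets are ANRs, so the relevant inclusions are cofibrations.
\end{itemize}

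Your fallback does not yet close the gap, for four reasons. Vietoris--Begle gives only (\v{C}ech) homology isomorphisms; the homotopy statement you need is the cell-like mapping theorem for ANRs. You would need it for $q$ as well, and $q$ is proper only when the $F_i$ are compact. Finally, the compact reduction is unjustified: cutting with a ball need not change $U$ by a deformation retraction, since radial retraction does not preserve the $F_i$.
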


\section{Observations about $\Cech\lb \cF_n^m;r\rb$}

\subsection{The open balls}
\subsubsection{The case $n=1$}
For $n=1$, the elements of $\cF_{1}^{m}$ are singletons $\{x\}$ for $x\in[m]$. Therefore, $d(\{x\},\{y\})=2$. We have that all singletons of singletons belong to \v{C}ech $(\cF_{n}^{m};r)$ since if $\sigma=\lc\lc x\rc\rc$, then $ \bigcap_{v\in\sigma}B\left[v,\dfrac{r}{2}\right]\ne\phi, $ since $v$ has only one option $\{x\}$ and the intersection is trivially a singleton. For $0\le\dfrac{r}{2}<2$ i.e., $0\leq r<4$, $B\ls v,\dfrac{r}{2}\rs=\{\{v\}\}$, while $B\ls v,\dfrac{r}{2}\ge2\rs=\{\{i\}:i\in\ls m\rs\}$, and hence $\la\{1\},\dots, \{m\}\ra$ is an $(m-1)$-simplex generated by all of the points.

\subsubsection{The case $n=2$}
For $n=2$, we first compute possible distances.
\begin{itemize}
  \item $d\lb \lc x,y\rc, \lc x,y\rc\rb=0$;
  \item $d\lb \lc x,y\rc, \lc y,z\rc\rb=2$;
  \item $d\lb \lc x,y\rc, \lc u,v\rc\rb=4$.
\end{itemize}
Let $v=\lb a, b\rb$ where $a<b\in\ls m\rs$. Then $B\ls v, \dfrac{r}{2}<2\rb=\lc v\rc$. Also,
$$B\ls v, 4>\dfrac{r}{2}\geq 2\rs=\lc\lc a,i\rc\lc j,b\rc~:~a\neq i\in\ls m\rs, b\neq j\in\ls m\rs\rc,$$
since at distance 0 lies the point $\lc a, b\rc$ itself at exactly distance 2 from $v$ lie those points whose one entry is $a$ and the other something else; and at distance 4 lies every other point in the space. Again, $B\ls v, \dfrac{r}{2}\geq 4\rs$ again consists of all the points in the space. For $\dfrac{r}{2}<4$, a simplex would be generated by $\lc\lc a, i\rc~:~a\neq i\in\ls m\rs\rc$ so that the intersection will be non-empty.

\subsubsection{The case $n=3$}
We proceed in a similar manner, with possible distances:
\begin{itemize}
  \item $d\lb \lc x,y,z\rc,\lc x,y,x\rc\rb=0$;
  \item $d\lb \lc x,y,z\rc,\lc x,y,w\rc\rb=2$;
  \item $d\lb \lc x,y,z\rc,\lc x,u,v\rc\rb=4$;
  \item $d\lb\lc x,y,z\rc, \lc u.v.w\rc\rb=6$.
\end{itemize}
Let $V=\lb a,b,c\rb$ where distinct $a,b,c\in\ls m\rs$.
\begin{itemize}
  \item $B\ls V, 0\leq \dfrac{r}{2}<2\rs=\lc v\rc$; hence each simplex is of the form $\s=\lc \lc V\rc\rc$ for $V\in\cF_3^m$.
  \item $B\lb V, 2\leq\dfrac{r}{2}<4\rb=\lc A\in\cF_3^m~:~\exists~ B\in \cF_2^m\ni B\subset V\text{ and }B\subset A\rc$; hence each simplex is of the form $N^{(1)}\ls i_1, i_2, i_3\rs=\lc A\in \cF_3^m~:~\lc i_1, i_2\rc\subset A\text{ or }\lc i_3, i_4\rc\subset A\rc$. Here $i_j\in\ls m\rs$ and $i_{j_1}\neq i_{j_2}$. In other words, $N^{(1)}\ls v\rs=\lc A\in \cF_3^m~:~\exists~ B\in\cF_2^m\ni B\subset V\text{ and }B\subset A\rc$.
  \item $B\lb V, 4\leq\dfrac{r}{2}<6\rb=\lc A\in\cF_3^m~:~\exists~ B\in \cF_1^m\ni B\subset V\text{ and }B\subset A\rc$; hence each simplex is of the form $N^{(2)}\ls i_1, i_2, i_3\rs=\lc A\in \cF_3^m~:~\lc i_1\rc\subset A\text{ or }\lc i_2\rc\subset A\text{ or }\lc i_3\rc\subset A\rc$. Here $i_j\in\ls m\rs$ and $i_{j_1}\neq i_{j_2}$. Hence $N^{(2)}\ls V\rs=B\ls V, 4\leq \dfrac{r}{2}<6\rs$.
  \item $B\ls V,\dfrac{r}{2}\geq 6\rs=\cF_3^m$. The whole space is a $\lb m^3-1\rb$-simplex.
\end{itemize}

\subsubsection{The generalization}
In general, for $V\in\cF_n^m$,
\begin{itemize}
  \item $B\ls V, \dfrac{r}{2}\rs~\lb 0\leq r<4\rb=\lc v\rc$; hence each simplex is of the form $\s=\lc \lc V\rc\rc$ for $V\in\cF_n^m$.
  \item For $4\leq r<8$, each simplex is of the form
  $$N^{(1)}\ls V\rs=\lc A\in \cF_n^m~:~\exists~\lc i_1, i_2,\dots, i_{n-1}\rc\subset V\ni\lc i_1, i_2,\dots i_{n-1}\rc\subset A\rc,$$ where $V\in \cF_n^m$.
  \item For $8\leq r<12$, each simplex is of the form
  $$N^{(2)}\ls V\rs=\lc A\in \cF_n^m~:~\exists~\lc i_1, i_2,\dots, i_{n-2}\rc\subset V\ni\lc i_1, i_2,\dots i_{n-2}\rc\subset A\rc.$$
  \item[]\vdots
  \item $B\lb V, r\geq 2n\rb=\cF_n^m$. the whole space is a $\lb m^n-1\rb$-simplex.
\end{itemize}
For $4k\leq r<4\lb k+1\rb$, each simplex is of the form
$$N^{(k)}\ls V\rs=\lc A\in \cF_n^m~:~\exists~\lc i_1, i_2,\dots, i_{n-k}\rc\subset V\ni\lc i_1, i_2,\dots i_{n-k}\rc\subset A\rc,$$ where $i_j\in\ls m\rs$ and $i_{j_1}\neq i_{j_2}$.
\section{Inductively, for $\cF_2^m$}
Clearly, $G_1$ consists of a single point $(0,0,0)$, hence $\Cech\lb G_1, \frac{r}{2}=1\rb$ is contractible. Similarly, $G_2$ and $G_3$ are contractible.
\begin{figure}[h]
\centering
\includegraphics[width=0.3\textwidth]{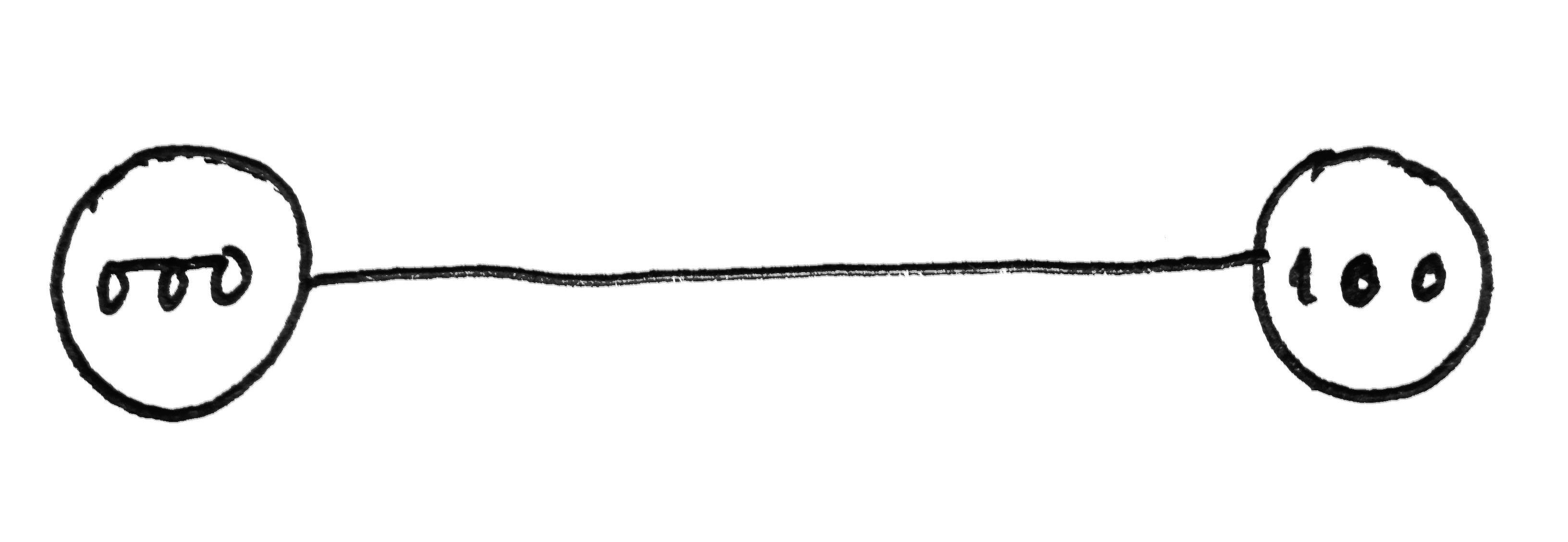}
\caption{$G_2$}
\label{fig:G2}
\end{figure}
\begin{figure}[h]
\centering
\includegraphics[width=0.3\textwidth]{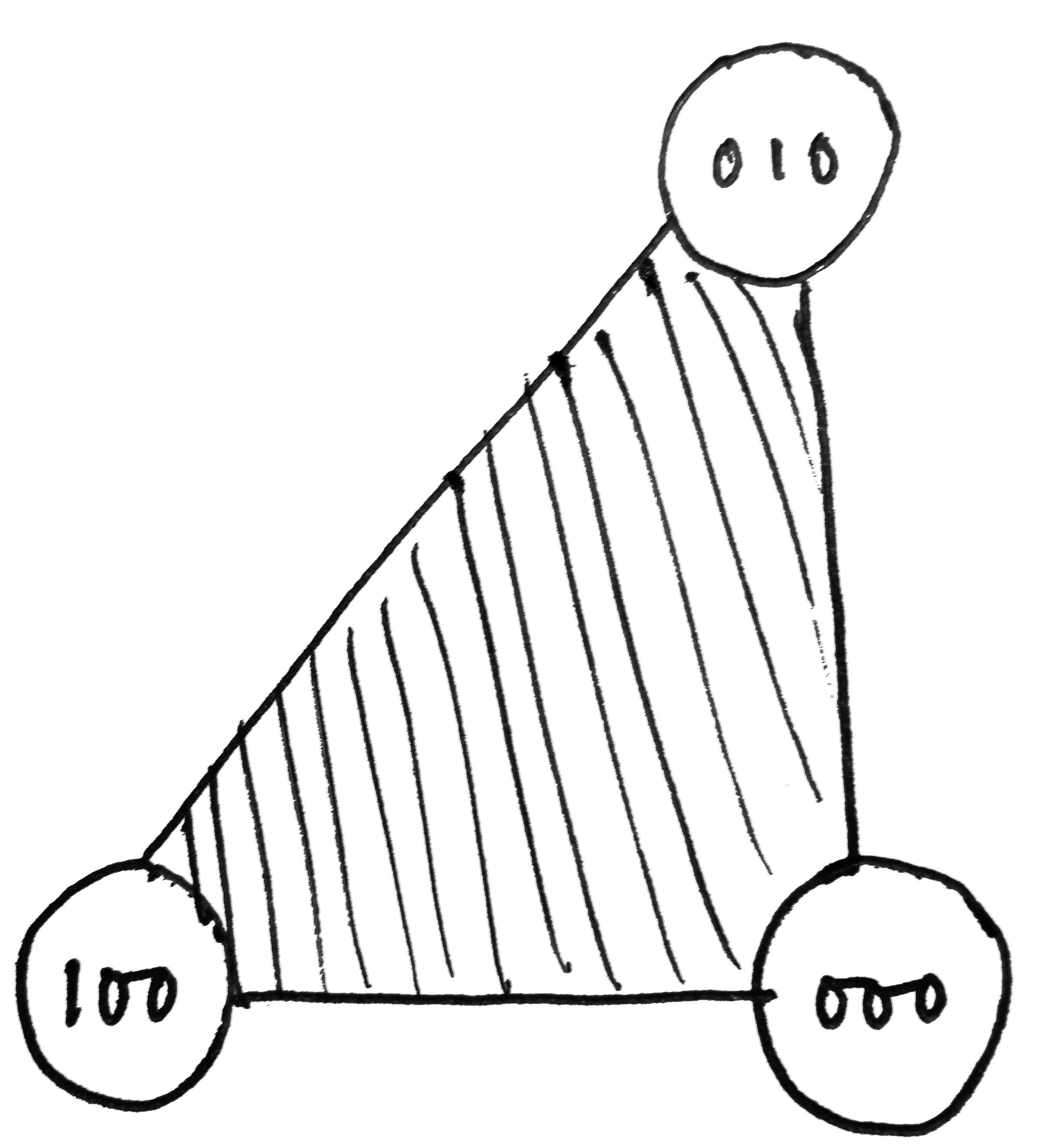}
\caption{$G_3$}
\label{fig:G3}
\end{figure}
\subsection{$G_4$}
As for $G_4$, we have 4 points: $(0,0,0), (1,0,0), (0,1,0), (0,0,1)$, and their neighbourhoods
\begin{itemize}
  \item $N\ls(0,0,0)\rs=\lc(0,0,0), (1,0,0), (0,1,0), (0,0,1)\rc$;
  \item $N\ls (1,0,0)\rs=\lc (1,0,0), (0,0,0)\rs$;
  \item $N\ls (0,1,0)\rs=\lc(0,1,0), (0,0,0)\rc$;
  \item $N\ls (0,0,1)\rs=\lc (0,0,1), (0,0,0)\rc$.
  \begin{figure}[h]
\centering
\includegraphics[width=0.3\textwidth]{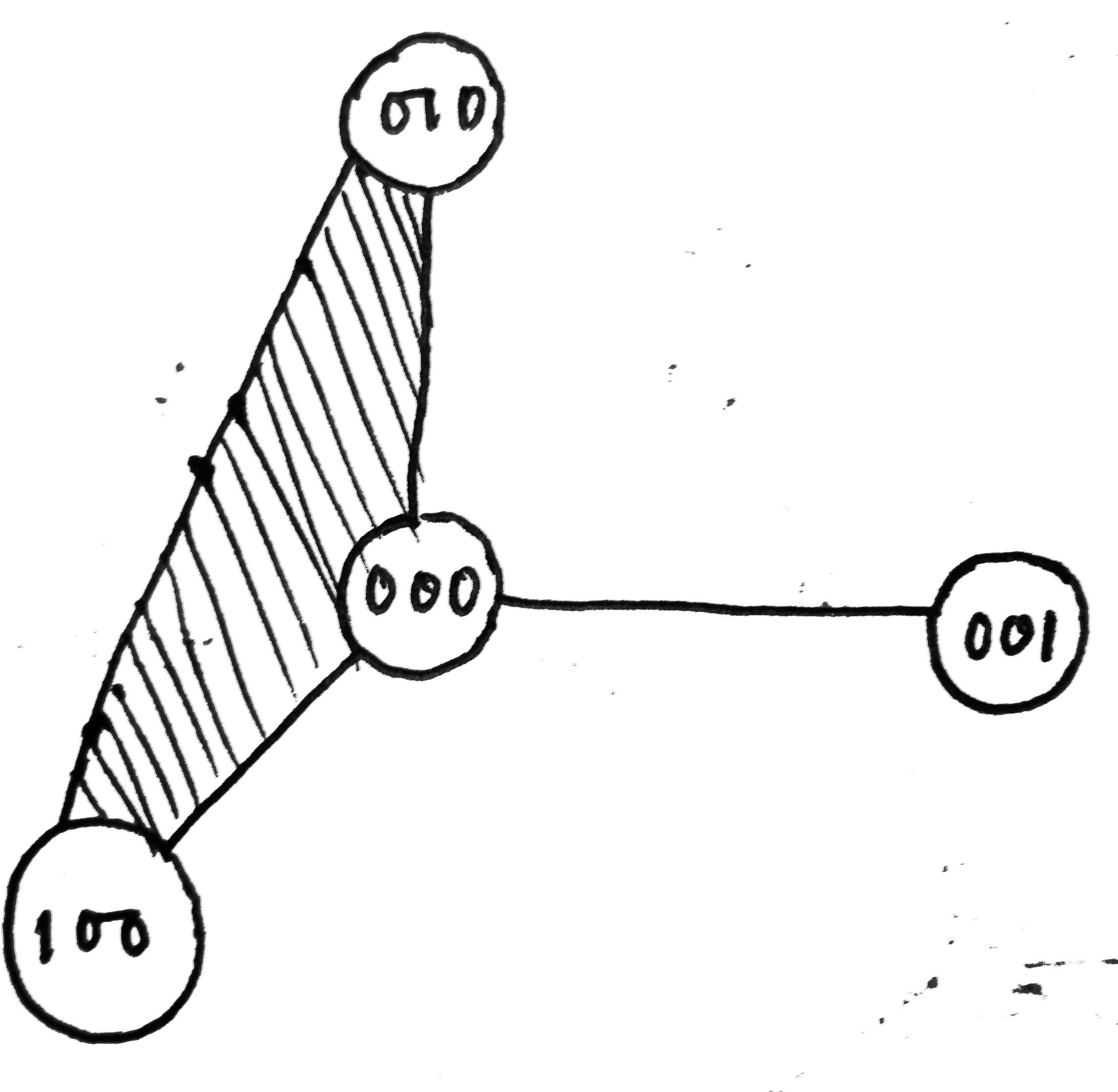}
\caption{$G_4$}
\label{fig:G4}
\end{figure}
  So, $G_4$ is contractible.
\end{itemize}
\subsection{$G_5$}
We have $\Cech\lb G_5\rb=\la N[(0,0,0)], N[(1,0,0)], N[(0,1,0)], N[(0,0,1)], N[(1,1,0)]\ra$.
\begin{itemize}
  \item $N[(0,0,0)]=\lc (0,0,0), (0,0,1), (1,0,0), (0,1,0)\rc$;
  \item $N[(1,0,0)]=\lc (1,0,0), (0,0,0), (1,1,0)\rc$;
  \item $N[(0,1,0)]=\lc (0,1,0), (1,1,0), (0,0,0)\rc$;
  \item $N[(0,0,1)]=\lc (0,0,1), (0,0,0)\rc$;
  \item $N[(1,1,0)]=\lc (1,1,0), (0,1,0), (1,0,0)\rc$.
\end{itemize}
\begin{figure}[h]
\centering
\includegraphics[width=0.45\textwidth]{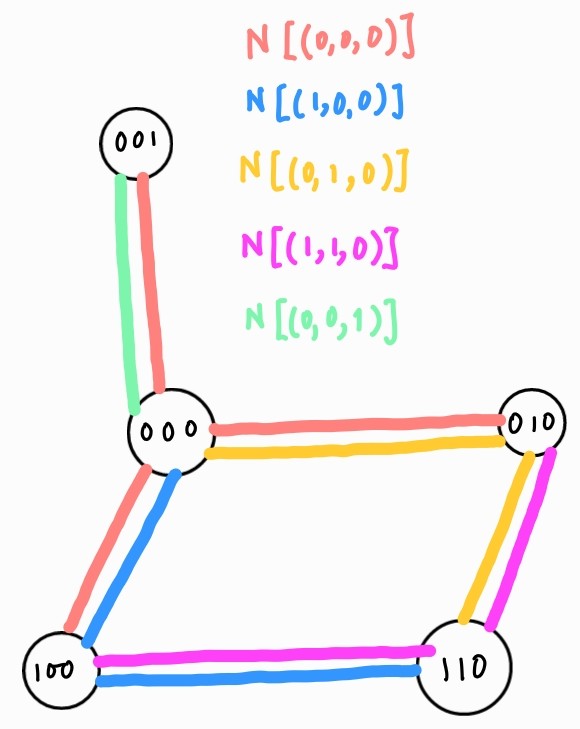}\includegraphics[width=0.45\textwidth]{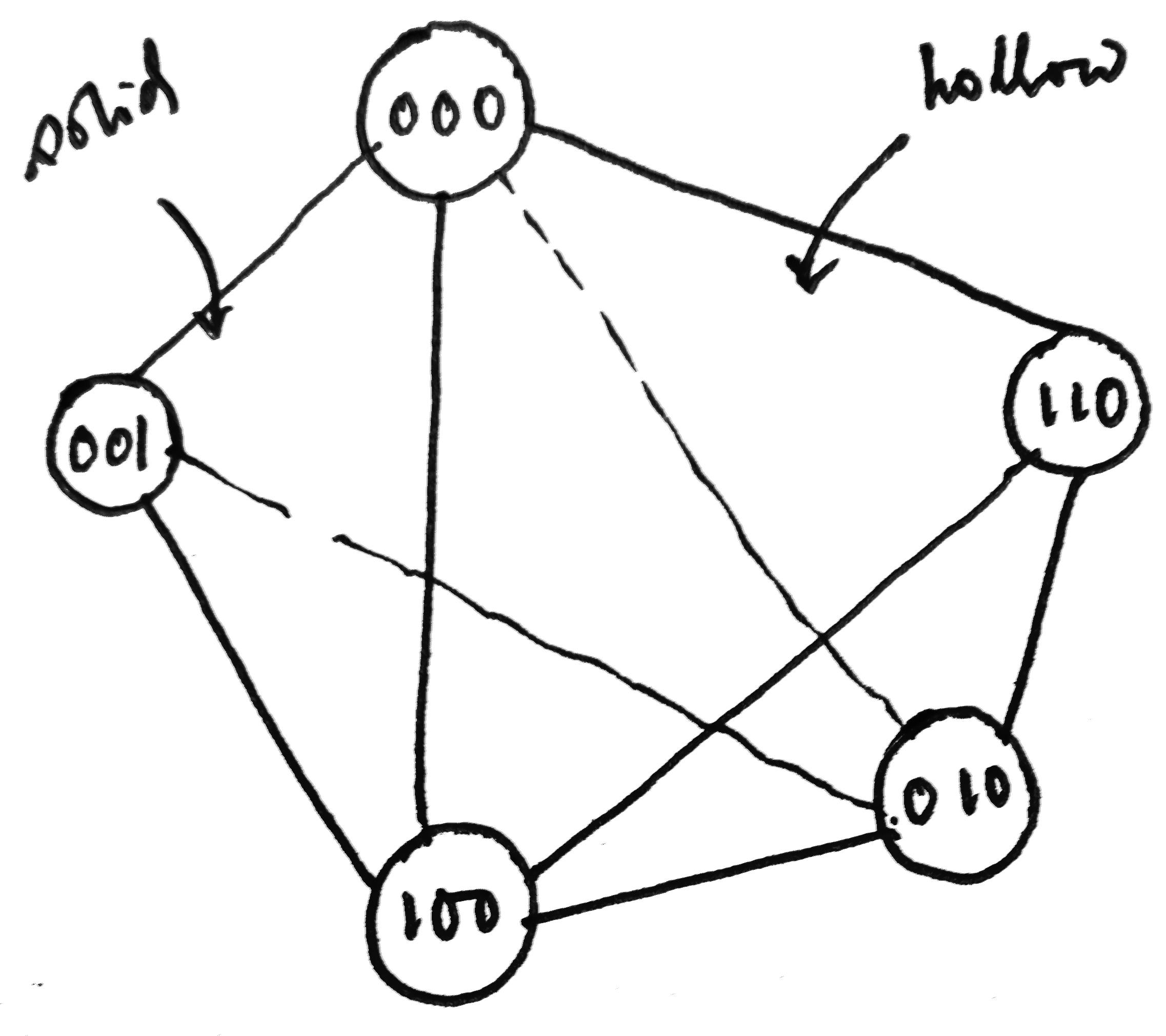}
\caption{$G_5$}
\label{fig:G5}
\end{figure}
Now, $\del\lb (1,1,0)\rb=\lc \s\in\Cech\lb G_5\rb:(1,1,0)\not\in\s\rc=\Cech\lb G_4\rb\simeq *$. Also, $\link\lb (1,1,0)\rb=\la N[v]:v\in N[(1,1,0)]\ra$\\$=\Delta\simeq \S 1$. By the link-deletion method, $\Cech\lb G_5\rb\simeq \S 2$.

For visualization, one can imagine filling in the discreteness, $G_5$ will have the structure of two tetrahedra, one hollow and one solid, attached at the surface $\lc (0,0,0), (1,0,0), (0,1,0)\rc$ [Fig \ref{fig:G5}].

This will be our general technique now on. We shall take the last-added point $v$, and in general,
\begin{equation}\label{deletion-induction}
  \Cech\lb G_k\rb\simeq \del\lb v\rb\vee \Sigma \link_{\Cech\lb G_k\rb}\lb v\rb=\Cech\lb G_{k-1}\rb\vee \Sigma \link_{\Cech\lb G_k\rb}\lb v\rb.
\end{equation}

\subsection{$G_6$}
We have $\Cech\lb G_6\rb=\la N[(0,0,0)],  N[(1,0,0)], N[(0,1,0)], N[(0,0,1)], N[(1,1,0)], N[(1,0,1)]\ra$, where
\begin{itemize}
  \item $N[(0,0,0)]=\lc (0,0,0), (0,0,1), (1,0,0), (0,1,0)\rc$;
  \item $N[(1,0,0)]=\lc (1,0,0), (0,0,0), (1,1,0), (1,0,1)\rc$;
  \item $N[(0,1,0)]=\lc (0,1,0), (1,1,0), (0,0,0)\rc$;
  \item $N[(0,0,1)]=\lc (0,0,1), (0,0,0), (1,0,1)\rc$;
  \item $N[(1,1,0)]=\lc (1,1,0), (0,1,0), (1,0,0)\rc$;
  \item $N[(1,0,1)]=\lc (1,0,0), (0,0,1), (1,0,1)\rc$.
\end{itemize}
In Figure \ref{fig:G6}, tetrahedra $\la (0,0,0), (0,0,1), (1,0,0), (0,1,0)\ra, \la (1,0,0), (0,0,0), (1,1,0), (1,0,1)\ra$ are solid, and the other two are hollow. Then, $\del(1,0,1)\simeq \Cech\lb G_5\rb\simeq \S 2$ and $\link(1,0,1)=\text{hollow }\la (0,0,1), (1,0,0),\right.$\\$\left. (0,0,0)\ra\simeq \S 1$. Then $\link(1,0,1)$ is contractible inside $\del(1,0,1)$. Thus, $\Cech\lb G_6\rb\simeq \S 2\vee \S 2$.
\begin{figure}[h]
\centering
\includegraphics[width=0.45\textwidth]{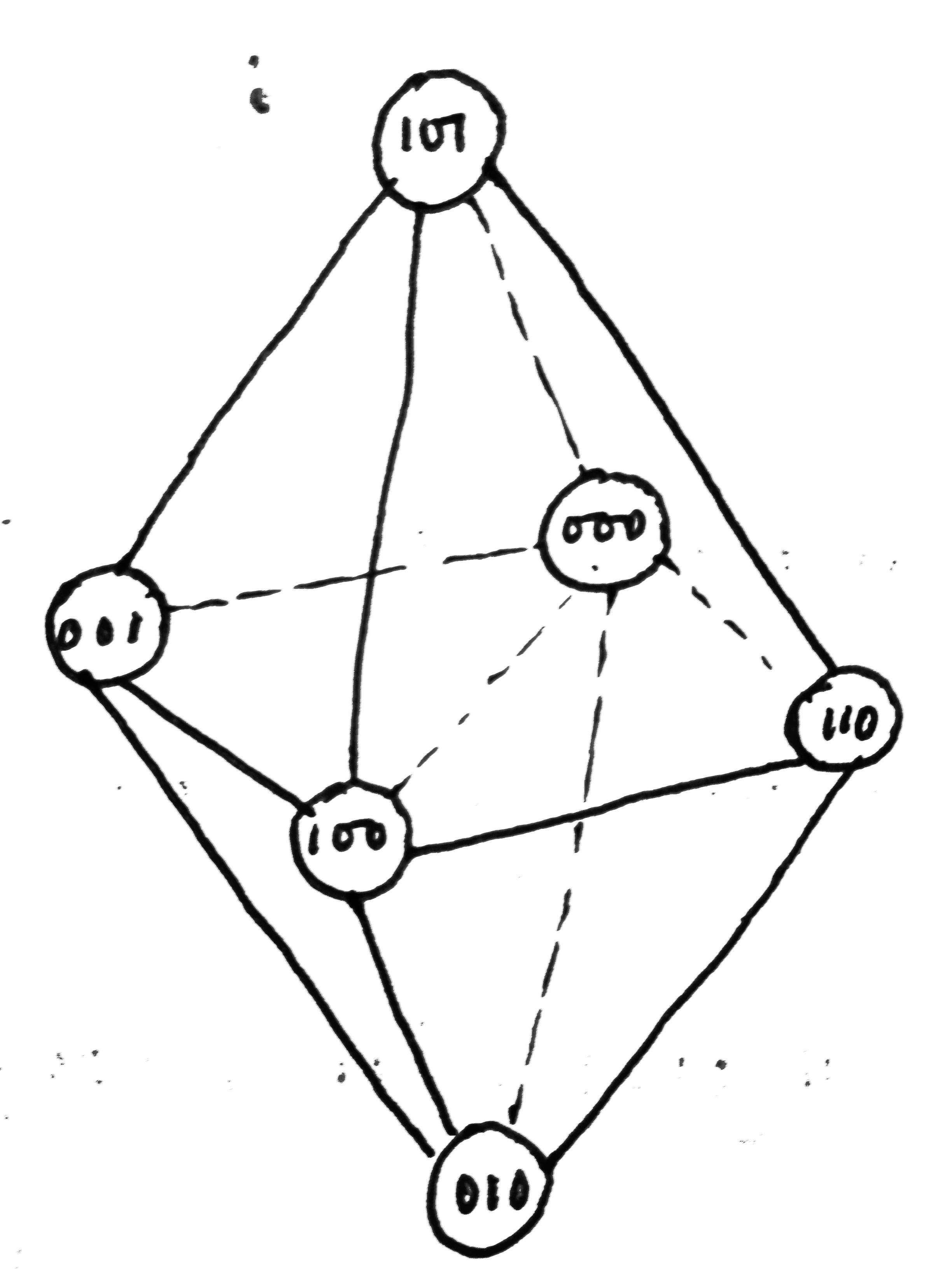}
\caption{$G_6$}
\label{fig:G6}
\end{figure}
 On similar calculations, we get $G_7\simeq \bigvee_4 \S 2$.
 
 \subsection{For sets of cardinality 2}
 Alternatively, we have the following lemma:
 \begin{lem}
 By moving ahead with one more vertex $\a$ in the CRL order, as long as $\lmid\a\rmid=2$, we wedge our previous $\Cech$ with an $\S 2$.
 \end{lem}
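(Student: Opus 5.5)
We use the vertex-by-vertex scheme of Eq.~(\ref{deletion-induction}) together with Lemma~\ref{link-deletion}. Let $\a=\lc i,j\rc$ with $i<j$ be the newly added vertex, and let $G$ be the set of vertices preceding $\a$ in the CRL order. Then $G$ consists of $\emptyset$, all singletons, and those pairs $\lc i',j'\rc\prec\a$. Since $r/2=1$, a family of vertices forms a simplex exactly when the closed balls of radius $1$ around them (in the cube $\mathbb{I}^m$) share a common point. The deletion of $\a$ is the previous $\Cech$ complex. So the claim reduces to two facts: $\link(\a)\simeq\S 1$, and the inclusion of $\link(\a)$ into the deletion is null-homotopic. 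Lemma~\ref{link-deletion} then gives the previous complex wedged with $\Sigma\S 1=\S 2$.

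\textbf{Step 1: compute the link.} A point in the ball of radius $1$ around $\a$ is either $\a$ itself, a neighbour $\lc i\rc$ or $\lc j\rc$, or a $3$-set $\a\cup\lc k\rc$.
\begin{itemize}
\item A face $\s$ of the link whose common ball-point is $\lc i\rc$ lies in the star of $\lc i\rc$ within the current vertex set.
\item Similarly for $\lc j\rc$.
\item A face whose common point is $\a$ consists of vertices at distance $\le 1$ from $\a$, namely $\lc i\rc$ and $\lc j\rc$ (excluding $\a$ itself).
\item A face whose common point is $\a\cup\lc k\rc$ consists of $\lc i,k\rc$ and $\lc j,k\rc$, provided these precede $\a$. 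Since $\lc i,k\rc\prec\lc i,j\rc$ forces $k<j$, some such $k$ exist. These give edges $\lc i,k\rc$--$\lc j,k\rc$, and I would check that they are covered by the stars of $\lc i\rc$ and $\lc j\rc$ or else contribute extra cycles.
\end{itemize}
The link is therefore $U\cup W\cup E$. Here $U$ is the simplex on $\lc\lc i\rc,\emptyset\rc\cup\lc\lc i,k\rc\prec\a\rc$, $W$ is the analogous simplex for $j$, and $E$ consists of the edge $\lc i\rc\lc j\rc$ and the edges $\lc i,k\rc\lc j,k\rc$.

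\textbf{Step 2: homotopy type of the link.} The cones $U$ and $W$ intersect only in the vertex $\emptyset$. Contract each to a point; each extra edge then becomes a loop between the two contracted points. The result is a graph on two vertices with $1+\#\lc k: \lc i,k\rc,\lc j,k\rc\prec\a\rc$ edges. This is where I expect the main obstacle. The lemma asserts exactly one $\S 2$, so the additional edges $\lc i,k\rc\lc j,k\rc$ must in fact be absorbed. I would recheck whether such $k$ with both pairs preceding $\a$ exist (for instance, the examples $G_5,G_6,G_7$ have $i=1$ and $k<j$ only with $k=i$ excluded). In the cases where they do exist, the faces containing $\emptyset$, $\lc i\rc$ and $\lc i,k\rc$ must be examined more carefully. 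First I would handle the case $i=1$ with $j$ arbitrary and verify that the link is exactly the triangle boundary $\lc\emptyset,\lc i\rc,\lc j\rc\rc$, which gives $\S 1$, as in $G_5$ and $G_6$.

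\textbf{Step 3: null-homotopy.} In the deletion, the vertices $\emptyset,\lc i\rc,\lc j\rc$ span a simplex, because all three lie in the ball of radius $1$ about $\emptyset$. This simplex fills the triangle cycle that generates the link's homology, and the cones $U$ and $W$ are already contractible in the deletion. Hence the inclusion of $\link(\a)$ is null-homotopic, and Lemma~\ref{link-deletion} yields the previous complex $\vee\,\S 2$. The induction then starts from the contractible $\Cech\lb G_4\rb$.
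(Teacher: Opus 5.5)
Your strategy (link--deletion at the new pair, link $\simeq\S 1$, null-homotopy via the $2$-simplex $\{\emptyset,\{i\},\{j\}\}\subset N[\emptyset]$) is the paper's. However, the proposal does not close, and the obstruction is the convention you fix at the outset.

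You allow witnesses anywhere in the cube. Then each $3$-set $\a\cup\{k\}$ contributes an edge $\{i,k\}\{j,k\}$ to the link whenever both pairs precede $\a$, which happens exactly for $k<i$, not for all $k<j$. These edges are not absorbed. $U\cup W$ is a wedge of two simplices at $\emptyset$, hence contractible, so the link is $\bigvee_{i}\S 1$. Your ``graph on two vertices'' undercounts by one, because $U$ and $W$ share $\emptyset$ and cannot be collapsed to two distinct points; as written it would even make the $i=1$ link contractible. Each extra loop bounds $\{\emptyset,\{i,k\},\{j,k\}\}\subset N[\{k\}]$ in the deletion. So under your convention, adding $\a$ wedges on $i$ copies of $\S 2$, and the lemma is false for $i\ge 2$. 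Already for $m=3$ and $\a=\{2,3\}$ (that is $G_7$, which is not an $i=1$ case as you suggest), the witness $\{1,2,3\}$ makes the link $\S 1\vee\S 1$.

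The missing point is that the paper's complex is the \v{C}ech complex of the finite metric space $\cF^m_{\preccurlyeq\a}$ itself. It is generated by $N[v]=B[v,1]\cap G$ for $v$ in the current vertex set. This is exactly what the paper uses when it says the only facets containing $\a$ are $N[\a]$, $N[\a_1]$, $N[\a_2]$; see also its remark that earlier points of the same cardinality are never adjacent to the new one. Since no $3$-set is a vertex, your $E$ reduces to the single edge $\{i\}\{j\}$. The link is then $U\cup W\cup\{\{i\},\{j\}\}$ with $U\cap W=\{\emptyset\}$, hence $\simeq\S 1$ for every $i$, and your Step~3 finishes the proof.

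Incidentally, your $U$ and $W$ include the earlier pairs $\{i,k\}$ and $\{j,k\}$, so they are more accurate than the paper's $N[\a_1]=\la\a_1,\a,0\ra$. The extra vertices only enlarge contractible pieces and do not affect the conclusion.
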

 \begin{proof}
 Let $\a$ be the last added vertex. Supposing $\Cech[\a]=\la \a, \a_1, \a_2\ra$, we have, since $\la\a_1,\a_2, 0\ra$ goes inside $\Cech[0]$, three facets containing $\a$:
 \begin{itemize}
   \item $\Cech\ls \a_1\rs=\la\a_1, \a, 0\ra$,
   \item $\Cech\ls \a_2\rs=\la\a_2, \a, 0\ra$,
   \item $\Cech[\a]$.
 \end{itemize}
 We know, $\link(\a)=\lc \t\in\Cech\lb G_{\text{no. }\a}\rb\given \a\not\in\t,~\t\cup \a\in\Cech\lb G_{\text{no. }\a}\rb\rc$, where $\text{no. }\a$ is the sequential numbering of graph when $\a$ is added. Removing $\a$ from the 3 new facets we get 3 lines: $\overline{\a_10}$, $\overline{\a_20}$, $\overline{\a_1\a_2}$. Of course this is not a filled-in triangle, since then $\la\a_1, \a_2,0\ra\subset \link(\a)\implies \la\a_1, \a_2,0, \a\ra\subset \Cech\lb G_{\text{no. }\a}\rb$, a contradiction. This $\link(\a)$ is contractible in $\Cech\lb G_{\text{no. }\a-1}\rb$, which is, by induction hypothesis, wedge of $\S 2$'s. Thus, by induction, using Eq (\ref{deletion-induction}), we have $\displaystyle\Cech\lb G_{\text{no. }\a}\rb\simeq\bigvee_{\text{no. }\a-4}\S 2$.
 \end{proof}
Thus, $\displaystyle\Cech\lb G_{\text{no. }\a}\rb\simeq \bigvee_{\binom{m}{2}}\S 2$ when $\displaystyle\a=\max\cF_2^m$.

\subsection{Generalization}
Let us move to the general case. We add a point $x\in\mathbb{I}_m$, and $x$ has $n$ non-zero coordinates. If there are any previous points with $n$ non-zero coordinates included in the complex, none are ``adjacent'' to $x$. So, basically, we get $(n+1)$ new facets:
\begin{itemize}
  \item $N[x]=\la x, x_1, x_2, \dots, x_n\ra$;
  \item $N[x_1]=\la x, x_1 x_1^{(1)}, x_1^{(2)}, \dots, x_1^{(n-1)}\ra$;
  \item $N[x_2]=\la x, x_2, x_2^{(1)}, x_2^{(2)}, \dots, x_2^{(n-1)}\ra$;
  \item[] $\vdots$
  \item $N[x_n]=\la x, x_n, x_n^{(1)}, x_n^{(2)}, \dots, x_n^{(n-1)}\ra$,
\end{itemize}
where $N[x]$ is the simplex around $x$, $x_i$ is the face of $x$ with the $i$th non-zero-coordinate made 0, $x_i^{(j)}$ is the face of $x_i$ with the $j$th non-zero-coordinate made 0. We have,
\begin{eqnarray*}
\link_{\Cech\lb \text{no. }x\rb}(x)&=&\lc\la x_1, x_2, \dots x_n\ra, \la x_1, x_1^{(1)}, x_1^{(2)}, \dots, x_1^{(n-1)}\ra, \la x_2, x_2^{(1)}, x_2^{(2)}, \dots, x_2^{(n-1)}\ra, \dots,\right.\\
&&\left. \la x_n, x_n^{(1)}, x_n^{(2)}, \dots, x_n^{(n-1)}\ra\rc.
\end{eqnarray*}
Transliteration to our ``subset of $[m]$'' notation,
\begin{eqnarray*}
\link_{\Cech\lb \text{no. }x\rb}(x)&=&\lc\la x\setminus\lc i_1\rc, x\setminus\lc i_2\rc, \dots x\setminus \lc i_n\rc\ra,\right.\\
&&\left.\la x\setminus\lc i_1\rc, x\setminus\lc i_1, i_2\rc, x\setminus\lc i_1, i_3\rc, \dots,x\setminus\lc i_1, i_n\rc\ra, \dots,\right.\\
&&\left.\la x\setminus\lc i_n, i_1\rc, x\setminus\lc i_n, i_2\rc, \dots, x\setminus\lc i_n, i_{n-1}\rc\ra\rc.
\end{eqnarray*}
These are $n$-simplices, $(n+1)$ in number, any 2 of which have exactly 1 point in common, and no two the same one. Irrespective of the dimension of the simplices, by the Nerve lemma, since all intersections are contractible, $\link_{\Cech\lb G_{\text{no. }x}\rb}(x)$ is homotopic to the union of the simplices, which in turn is homotopic to $K_{n+1}$, the complete graph with $n+1$ vertices. The homotopy type is then $\displaystyle\bigvee_{\binom{n}{2}}\S 1$. Since this is contractible inside $\del_{\Cech\lb G_{\text{no. }x}\rb}(x)$, a wedge of $\S 2$'s, using Eq \ref{deletion-induction},
\begin{eqnarray}\label{use-of-nerve}
  \Cech\lb G_{\text{no. }x}\rb&\simeq& \Cech\lb G_{\text{no. }x-1}\rb\vee \Sigma \link_{\Cech\lb G_{\text{no. }x}\rb}(x)\nonumber\\
  &\simeq&\Cech\lb G_{\text{no. }x-1}\rb\vee \Sigma \bigvee_{\binom{n}{2}}\S 1\nonumber\\
  &=&\Cech\lb G_{\text{no. }x-1}\rb\vee \bigvee_{\binom{n}{2}}\S 2
\end{eqnarray}
Using induction, for any $m,n\in\naturals$ and $a\in\cF_n^m$,
\begin{eqnarray}
  \Cech\lb \cF_{\preccurlyeq a}^m\rb&=&\bigvee_{\binom{m}{2}}{\binom{2}{2}}\S 2
  \vee \bigvee_{{\binom{m}{3}}{\binom{3}{2}}}\S 2 \vee \dots
  \vee \bigvee_{{\binom{m}{n-1}}{\binom{n-1}{2}}}\S 2 \vee
  \bigvee_{\#\lc b\in \cF_n^m\given b\preccurlyeq a\rc}\S 2\nonumber\\
  &=&\bigvee_{\sum_{i=2}^{n-1}\binom{m}{i}\binom{i}{2}+\#\lc b\in \cF_n^m\given b\preccurlyeq a\rc}\S 2.
\end{eqnarray}
If we take $a$ to be the last element (with CRL ordering) of $\cF_n^m$, $a\dash$ of $\cF_{n-1}^m$, then we get $\Cech\lb \cF_{\preccurlyeq a}^m\rb\setminus \Cech\lb \cF_{\preccurlyeq a\dash}^m\rb=\Cech\lb \cF_n^m\rb$.
\begin{note}
The homotopy type remains same for $2\leq r<4$ as in $r=2$.
\end{note}

\section{For general $r$}

\subsection{Calculations using Python program}

\begin{center}
    \begin{tabular}{|c|c|}
        \hline
        Complex & $4\leq r<8$ \\
        \hline
        $\Cech\lb \cF_1^4; r\rb$&$\bigvee_3\S 0$ \\
        $\Cech\lb \cF_2^4; r\rb$&$\S4$ \\
        $\Cech\lb \cF_3^4; r\rb$&$*$ \\
        \hline
        $\Cech\lb \cF_2^5; r\rb$&$\S3\vee\lb \bigvee_5\S4\rb$ \\
        $\Cech\lb \cF_3^5; r\rb$&$\S3\vee\lb\bigvee_3\S4\rb$\\
        \hline
        $\Cech\lb \cF_2^6; r\rb$&$\lb \bigvee_5\S3\rb\vee\lb\bigvee_{15}\S4\rb$\\
        $\Cech\lb \cF_3^6; r\rb$&$\lb \bigvee_{11}\S3\rb\vee\lb\bigvee_{30}\S4\rb$\\
        $\Cech\lb \cF_4^6; r\rb$&$\lb \bigvee_5\S3\rb\vee\lb\bigvee_{15}\S4\rb$\\
        \hline
        $\Cech\lb \cF_2^7; r\rb$&$\lb \bigvee_{15}\S3\rb\vee\lb\bigvee_{35}\S4\rb$\\
        $\Cech\lb \cF_3^7; r\rb$&$\lb \bigvee_{50}\S3\rb\vee\lb\bigvee_{105}\S4\rb$\\
        \hline
        $\Cech\lb \cF_3^8; r\rb$&$\lb \bigvee_{155}\S3\rb\vee\lb\bigvee_{280}\S4\rb$\\
        \hline
        $\Cech\lb \cF_3^9; r\rb$&$\lb \bigvee_{385}\S3\rb\vee\lb\bigvee_{630}\S4\rb$\\
        \hline
    \end{tabular}
\end{center}

\subsection{For any $m\in\naturals$, $n=2,3$}
\begin{lem}\label{contraction}
Consider $A=\lc \lc i_1<i_2<i_m\rc~:~\lc i_1<i_2\rc\in\cF_2^{m-1}\rc\cong \cF_2^{m-1}$. Clearly, $A\subset \cF_3^m$. Then $\p:\cF_3^m\tends A;~\lc i_1<i_2<i_m\rc\mapsto \lc i_1<i_2<m\rc$ is a contraction.
\end{lem}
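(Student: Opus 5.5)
The plan is to read ``contraction'' in the metric sense: $\p$ is a retraction of $\cF_3^m$ onto $A$ that does not increase the symmetric difference distance $d$. Three things need checking, in this order.

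First, \emph{well-definedness}. For $x=\lc i_1<i_2<i_3\rc\in\cF_3^m$ we have $i_1<i_2<i_3\le m$. Hence $i_1<i_2\le m-1$, so $\lc i_1,i_2\rc\in\cF_2^{m-1}$ and $\p(x)=\lc i_1,i_2,m\rc\in A$. Writing $x'=x\setminus\lc\max x\rc$ for the two smallest elements, the map is $\p(x)=x'\cup\lc m\rc$. Under the identification $A\cong\cF_2^{m-1}$, $\p(x)$ corresponds to $x'$. Second, \emph{retraction}. If $x\in A$ then $\max x=m$, so $x'\cup\lc m\rc=x$ and $\p|_A=\id_A$.

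Third, \emph{non-expansion}, which is the only real step. Since $d(P,Q)=|P|+|Q|-2|P\cap Q|$, we have
$$d(\p x,\p y)=d(x',y')=2\lb 2-|x'\cap y'|\rb,\qquad d(x,y)=2\lb 3-|x\cap y|\rb.$$
So $d(\p x,\p y)\le d(x,y)$ is equivalent to $|x\cap y|\le |x'\cap y'|+1$. (The element $m$ is common to $\p x$ and $\p y$ and contributes nothing to their distance.)

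To prove the inequality, note that $x\cap y\subseteq (x'\cap y')\cup\lc \max x,\max y\rc$. It therefore suffices to show that $x\cap y$ cannot contain both $\max x$ and $\max y$ when these are different. Suppose $\max x<\max y$ and $\max y\in x$. Then $\max y\le\max x$, a contradiction. Symmetrically the case $\max y<\max x$ fails. If $\max x=\max y$ the two candidates coincide. So at most one element of $x\cap y$ lies outside $x'\cap y'$, which gives the inequality.

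I expect this small combinatorial observation, that two maxima can never both be shared, to be the main obstacle. Everything else is bookkeeping.

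As a consequence worth recording for later use, $\p$ is 1-Lipschitz. Hence $\p$ maps closed balls $B\ls x,\frac r2\rs$ into $B\ls \p x,\frac r2\rs$, so any point common to a family of balls in $\cF_3^m$ is sent to a point common to the corresponding balls around the images. Thus $\p$ induces a simplicial map $\Cech\lb\cF_3^m;r\rb\to\Cech\lb A;r\rb$ restricting to the identity on $\Cech\lb A;r\rb$. This is how I expect the lemma to be used in what follows, but it is not needed for the statement itself.
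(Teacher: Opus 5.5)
Your proof is correct. Its overall strategy is the paper's: check $\pi|_A=\mathrm{id}_A$ and $d(\pi x,\pi y)\le d(x,y)$ directly.

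The difference is in how the inequality is checked. The paper lists the possible values of $d(\pi x,\pi y)$ and of $d(x,y)$ according to how the lower pairs $\{i_1,i_2\}$, $\{j_1,j_2\}$ meet and whether the top elements coincide, then compares the two lists. You instead use $d(P,Q)=|P|+|Q|-2|P\cap Q|$ to reduce everything to $|x\cap y|\le|x'\cap y'|+1$. You then prove that by observing that two distinct maxima cannot both be common elements.

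Your version is shorter, and it is also more reliable. The paper's case list never allows for the top element of one triple lying among the two smallest of the other, so it assigns some distances wrongly:
\begin{itemize}
\item $\{1,2,3\}$ and $\{3,4,5\}$ have disjoint lower pairs and different tops, yet are at distance $4$, not $6$;
\item $\{1,2,3\}$ and $\{1,3,4\}$ are at distance $2$, not $4$.
\end{itemize}
The inequality still holds in these cases, so the lemma is true. But it is your maxima observation, not the paper's enumeration, that actually covers them.

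Your closing remark is also sound: $\pi$ induces a simplicial retraction of $\Cech(\cF_3^m;r)$ onto $\Cech(A;r)$. Together with the isometry $A\cong\cF_2^{m-1}$, this justifies the paper's subsequent claim that the homology of $\Cech(\cF_2^{m-1};r)$ injects into that of $\Cech(\cF_3^m;r)$.
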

\begin{proof}
If $A\subset X$ is closed, then $f:X\tends A$ is a contraction if $f|_A=\id_A$ and $d\lb f\lb x\rb, f\lb y\rb\rb\leq d\lb x,y\rb$. Let $x=\lb i_1, i_2, i_3\rb, y=\lb j_1, j_2, j_3\rb$. Then
$$d\lb f\lb x\rb, f\lb y\rb\rb=d\lb \lc i_1, i_2, m\rc, \lc j_1, j_2, m\rc\rb=
\begin{cases}
  4&\text{if }\lc i_1, i_2\rc\cap \lc j_1, j_2\rc=\phi;\\
  2&\text{if }\lc i_1, i_2\rc\cap \lc j_1, j_2\rc=1;\\
  0&\text{if }\lc i_1, i_2\rc=\lc j_1, j_2\rc
\end{cases}$$
and
$$d\lb x,y\rb=d\lb \lc i_1, i_2, i_3\rc, \lc j_1, j_2, j_3\rc\rb=
\begin{cases}
  6&\text{if }\lc i_1, i_2\rc\cap \lc j_1, j_2\rc=\phi\text{ and }j_1\neq j_2;\\
  4&\text{if }\lc i_1, i_2\rc\cap \lc j_1, j_2\rc=1\text{ or if } j_1=j_2\text{ but not both};\\
  2&\text{if }\lc i_1, i_2\rc=\lc j_1, j_2\rc\text{ or if }\lc i_1, i_2\rc\cap \lc j_1, j_2\rc=1,i_3=j_3;\\
  0&\text{if }\lc i_1, i_2\rc=\lc j_1, j_2\rc\text{ and }i_3=j_3.
\end{cases}$$
In any case, $d\lb f\lb x\rb, f\lb y\rb\rb\leq d\lb x,y\rb$.
\end{proof}
Then, $\H_q\lb\Cech\lb \cF_2^{m-1};r\rb\rb\hookrightarrow \H_q\lb\Cech\lb \cF_3^{m};r\rb\rb$ is an (injective) inclusion.
Let $\D_m\approx\Cech \lb \cF_2^m;~4\leq r<8\rb=K_1\cup K_2$ where $\D\geq K_1=\la \s_{i,m}~:~i\in\ls m-1\rs\ra$, $K_2=\la \s_{x,y}~:~1\not\in\lc x,y\rc\ra=\la\D_m\setminus\lc \s_{i,m}~:~i\in\ls m-1\rs\rc\ra$. Here $\s_{x,y}$ is a facet $B\ls \lc x,y\rc;~2\leq \dfrac{r}{2}<8\rs$.

We calculated the following homotopy types using computer:

\begin{center}
    \begin{tabular}{|c|c|c|c|c|}
    \hline
Complex & $K_1$ & $K_2$ & $K_1\cap K_2$ & Homotopy type\\
\hline
$\D_4$ & $*$ & $*$ & $\S3$ & $\S4$\\
$\D_5$ & $*$ & $\S4$ & $\S2\vee\lb \bigvee_4\S3\rb$ & $\S3\vee\lb \bigvee_5\S4\rb$\\
$\D_6$ & $*$ & $\S3\vee\lb \bigvee_5\S4\rb$ & $\lb \bigvee_4\rb\S2\vee\lb \bigvee_{10}\S3\rb$ & $\lb \bigvee_5\rb\S3\vee\lb \bigvee_{15}\S4\rb$\\
$\D_7$ & $*$ & $\lb \bigvee_5\S3\rb\vee\lb \bigvee_{15}\S4\rb$ & $\lb \bigvee_{10}\S2\rb\vee\lb \bigvee_{20}\S3\rb$ & $\lb \bigvee_{15}\S3\rb\vee\lb \bigvee_{35}\S4\rb$\\
\hline
    \end{tabular}
  \end{center}

Now, we propose the following conjecture:
\begin{conj}\label{i2-contractible}
The inclusion $i_2:K_1\cap K_2\inclusion K_2$ is null-homotopic, i.e., $K_1\cap K_2$ is contractible inside $K_2$.
\end{conj}
Note that $K_1$ is contractible since vertex $\lc 1,m\rc$ appears in every facet, and $K_1$ is a cone with apex $\lc 1,m\rc$. For $K_2$, the initial set is $\ls m-1\rs$ to choose the elements in the vertices from, so it is like $\D_{m-1}$ with some additional 2-distance-edges, e.g., there are edges from $\lc 1,m\rc$ and $\lc 2,m\rc$ to $\lc 1,2\rc$ inside $K_2$; and $\lc 1,m\rc$ and $\lc 2,m\rc$ have an edge between them. Using Lemma \ref{k1-union-k2}, $\N\lb\text{facets of }K_2\rb\simeq \lb\D_{m-1}\rb$. If the Conjecture \ref{i2-contractible} is true, then
\begin{eqnarray*}
  \D_m&\simeq&K_1\vee K_2\vee \Sigma\lb K_1\cap K_2\rb\\
  &\simeq& *\vee \D_{m-1}\vee \Sigma\lb\lb \bigvee_{\binom{m-1}{4}-\binom{m-2}{4}}\S 2\rb\vee\lb\bigvee_{\binom{m}{4}-\binom{m-1}{4}}\S 3\rb\rb\\
  &\simeq&\D_{m-1}\vee\lb\lb \bigvee_{\binom{m-2}{3}}\S 2\rb\vee \lb\bigvee_{\binom{m-1}{3}}\S 3\rb\rb
\end{eqnarray*}
This proves, using induction,
\begin{equation}
  \Cech\lb \cF_2^m;4\leq r<8\rb\simeq\lb\bigvee_{\binom{m-1}{4}}\S 3\rb\vee\lb\bigvee_{\binom{m}{4}}\S 4\rb.
\end{equation}
Further, we conjecture

\begin{conj}\label{F-3}
The following homotopy type:
\begin{equation}
\Cech\lb \cF_3^m;4\leq r<8\rb\simeq\lb\bigvee_{\g_m}\S 3\rb\vee\lb\bigvee_{\lb m-4\rb\a_m}\S 4\rb  
\end{equation}
where $\displaystyle\a_m=\binom{m}{4}$, $\displaystyle\g_m=\lb m-4\rb\a_m-\dfrac{3m+1}{4}\binom{m-2}{3}$.
\end{conj}

%\nocite{*}
 
\end{document}